\thanks {2010 Mathematics Subjects Classification. Primary 37B40,
37A35, 22D25.}
\keywords{Residually finite amenable group action, periodic measure, specification
property}
\begin{document}
\theoremstyle{plain}
\newtheorem{main}{Theorem}
\renewcommand{\themain}{\Alph{main}}
\newtheorem{maincor}[main]{Corollary}
\newtheorem{Thm}{Theorem}[section]
\newtheorem{lem}[Thm]{Lemma}
\newtheorem{Prop}[Thm]{Proposition}
\newtheorem{Cor}[Thm]{Corollary}
\newtheorem{Cla}[Thm]{Claim}
\theoremstyle{remark}
\newtheorem{Def}[Thm] {Definition}
\newtheorem{Rem}[Thm] {Remark}
\newtheorem{Con}{Conjecture}
\newtheorem{Exa}[Thm] {Example}
\newtheorem{Que}[Thm] {Question}
\setlength{\parindent}{2em}
\newcommand{\var}{\ensuremath{\varepsilon}}
\newcommand{\und}{\ensuremath{\underline}}
\newcommand{\tbf}{\ensuremath{\textbf}}
\newcommand{\vhi}{\ensuremath{\varphi}}
\newcommand{\emp}{\ensuremath{\emptyset}}
\newcommand{\mcl}{\ensuremath{\mathcal}}
\renewcommand{\baselinestretch}{1.5}
\setlength{\parskip}{0\baselineskip}
\hspace*{\parindent}
\renewcommand{\baselinestretch}{1.8}
\addtocontents{toc}{\protect\vspace{5pt}}

\title
[Periodic measure]
{Periodic measures are dense in
invariant measures for residually finite
amenable group actions with specification}

\author[Ren]
{Xiankun Ren}

\maketitle
{\bf Abstract} We prove that for actions of a discrete countable residually
finite amenable group on a compact metric space with specification property,  periodic measures are dense in the
set of invariant measures. We also prove that some certain expansive
actions of a countable discrete group by automorphisms of compact abelian groups have specification property.

\section{Introduction}
Let $G$ be a discrete countable residually finite amenable group
acting on a compact metric space $X.$ Denote by $\mathcal{M}(X,G)$
the set of $G$-invariant measures and $\mathcal{M}_{e}(X,G)$ the
set of ergodic $G-$invariant measures. For a point $x\in X,$ we call
$x$ a periodic point if $|orb(x)|<\infty.$ Denote the periodic
measure $\mu_{x}$ as a probability measure with mass $|orb(x)|^{-1}$ at
each point of $orb(x)$ and we denote
$\mathcal{M}_{P}(X)$   the set of all such
periodic measures by $\mathcal{M}_{P}(X).$

For $G=\mathbb{Z}$ the theory of such dynamical system is well studied.
For example in \cite{Sigmund}, Sigmund proved the periodic measures are dense in
invariant measures for Axiom A-diffeomorphisms. In \cite{Hirayama}, Hirayama proved that the  measures supported by hyperbolic periodic points are dense in
invariant measures for  mixing $C^{1+\alpha}$ diffeomorphisms.
In \cite{Liang}, Liang, Liu and Sun proved that each invariant measure in a non-uniformly hyperbolic
system can be approximated by atomic measures on hyperbolic periodic
orbits.

Throughout this paper, $G$ will be a infinite discrete countable residually finite
amenable group acting on a compact metric space $X$ with metric $\rho.$ We will
recall some definitions and terminology in the next section.

Our main results are as follows.
\begin{Thm}\label{main1}
Let $G$  be a discrete countable residually finite
amenable group acting on a compact metric space $X$ with specification
property.
Then $\mcl{M}_{P}(X,G)$ is dense in $\mcl{M}(X,G)$ in the weak$^*$ topology.
Moreover $\mcl{M}_{e}(X,G) $ is residual in $\mcl{M}(X,G).$
\end{Thm}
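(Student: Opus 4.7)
The plan is to adapt Sigmund's classical argument for $\mathbb{Z}$-actions to the present setting, with Birkhoff's pointwise ergodic theorem replaced by Lindenstrauss's pointwise ergodic theorem along a tempered F\o lner sequence, and with the ``close an orbit segment up to a periodic orbit'' step replaced by ``shadow a fundamental domain by a $G_m$-periodic point,'' where $G_1 \supset G_2 \supset \cdots$ is the decreasing chain of finite-index normal subgroups with $\bigcap_m G_m = \{e\}$ supplied by residual finiteness.

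The heart of the matter is the following approximation claim: for every $\mu \in \mcl{M}_e(X,G)$ and every $\var > 0$ there is $\mu_y \in \mcl{M}_P(X,G)$ that is weak$^*$-close to $\mu$. To prove it, I would first fix a tempered F\o lner sequence $\{F_m\}$ in which each $F_m$ is a set of coset representatives for $G/G_m$, so that $F_m$ is a fundamental domain for $G_m$. Using Lindenstrauss's theorem, pick a $\mu$-generic point $x$, so that the empirical measures $\nu_m := |F_m|^{-1}\sum_{g \in F_m} \delta_{gx}$ converge weakly to $\mu$. Apply the specification property (in its ``periodic'' form) to a single patch to produce, for all sufficiently large $m$, a $G_m$-periodic point $y$ with $\rho(gx, gy) < \var$ for every $g \in F_m$. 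Since $y$ is $G_m$-fixed and $F_m$ is a fundamental domain, we can write $\mu_y = |F_m|^{-1}\sum_{g \in F_m} \delta_{gy}$. For any $f \in C(X)$, uniform continuity then bounds $|\int f\, d\mu_y - \int f\, d\nu_m|$ by the modulus of continuity of $f$ at $\var$; together with $\nu_m \to \mu$ this gives $\mu_y \to \mu$. Density of $\mcl{M}_P(X,G)$ in $\mcl{M}(X,G)$ follows because ergodic decomposition makes convex combinations of ergodic measures dense, and each ergodic measure is approximated by the step above.

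For the ``moreover'' clause, $\mcl{M}(X,G)$ is a compact metrizable Choquet simplex and its set of extreme points $\mcl{M}_e(X,G)$ is $G_\delta$ by the standard criterion (e.g.\ $\mu$ is ergodic iff for every $f \in C(X)$ the spatial averages $|F_m|^{-1}\sum_{g\in F_m} f\circ g$ converge in $L^2(\mu)$ to $\int f\, d\mu$). Every periodic orbit measure is visibly ergodic, so $\mcl{M}_P(X,G) \subseteq \mcl{M}_e(X,G)$, and the density of the former forces the density of the latter; a dense $G_\delta$ in a complete metric space is residual.

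The main obstacle will be lining up the three ingredients cleanly. First, one needs a formulation of specification that produces a $G_m$-periodic shadowing point when the prescribed patch is placed over a fundamental domain of $G_m$; this is presumably the version of specification that the paper is taking. Second, one needs to know that tempered F\o lner sequences consisting of fundamental domains for the $G_m$ actually exist in every residually finite amenable group, so that Lindenstrauss's theorem applies along the very sequence used in the shadowing step. Once these compatibility issues are settled, the triangle-inequality estimate above is routine; the real work is in the construction and verification of these two compatibilities.
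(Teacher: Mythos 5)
Your proposal has a genuine gap at its final step: the passage from ergodic measures to all invariant measures. Your Lindenstrauss-plus-specification step (once repaired as below) shows only that $\overline{\mcl{M}_P(X,G)}\supseteq \mcl{M}_e(X,G)$. From there you invoke ergodic decomposition, but that cannot close the argument: periodic measures are themselves ergodic, so $\mcl{M}_P$ lies inside the set of extreme points of the simplex $\mcl{M}(X,G)$, and a convex combination of periodic measures is \emph{not} periodic. Hence density of $\mcl{M}_P$ in $\mcl{M}(X,G)$ would require either that $\overline{\mcl{M}_P}$ be convex (which needs its own proof) or that non-ergodic measures be approximated directly. Worse, the intermediate assertion you would need --- that $\mcl{M}_e(X,G)$ is dense in $\mcl{M}(X,G)$ --- is false for general actions (take a system whose invariant measures form a segment between two fixed-point masses) and is in fact equivalent to the ``moreover'' clause of the theorem; so as written the reduction is circular. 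This is precisely where the paper's proof does its real work, and why it looks so different from Sigmund's ergodic-first scheme: it approximates an \emph{arbitrary} $\nu\in\mcl{M}(X,G)$ in one stroke. Concretely, it uses the mean ergodic theorem to define the limit functions $\xi^{*}$ on the basin, builds the finite partition $\eta=\{A_1,\dots,A_l\}$ according to the values of $\xi^{*}$, applies Egorov to get uniform convergence on $X^{\prime}$, and then invokes Ornstein--Weiss quasi-tiling (Proposition \ref{prime}) to fill a large fundamental domain $Q_m$ with translated tiles $Q_{n_j}c_j$ \emph{allocated to the partition elements in proportions close to} $\nu(A_i)$. A single application of specification, with the shrunken tiles as the patches $F_1,\dots,F_t$ and $G^{\prime}=G_m$, splices points $x_j(c_j)\in c_j^{-1}A_i\cap X^{\prime}$ drawn from \emph{all} the $A_i$ into one $G_m$-periodic point $y$ whose orbit measure equidistributes like $\nu$, ergodic or not. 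If you want to keep your ergodic-first route, you would still need this multi-patch splicing over a quasi-tiling (or an equivalent device) to prove $\overline{\mcl{M}_P}$ convex, at which point you have reproduced the paper's machinery anyway.

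The compatibility issue you flagged but left unresolved is real, though fixable, and the paper's treatment shows the fix. You cannot take the prescribed patch to be the whole fundamental domain $Q_m$: condition (\ref{specification2}) with $i=j$ requires $FQ_m\cap Q_m(G_m\setminus\{e_G\})=\emptyset$, which fails because $FQ_m$ spills out of $Q_m$ and the spillover lies in $\bigcup_{h\in G_m\setminus\{e_G\}}Q_mh$. One must shrink to the $F$-interior, as the paper does with $T_{n_j}(c_j)=\{s\in \widetilde{T}_{n_j}(c_j)\,|\,Fs\subset Q_{n_j}\}$ and $S_{n_j}(c_j)$, the F{\o}lner property guaranteeing the discarded boundary has small proportion; the paper then verifies (\ref{specification1}) and (\ref{specification2}) using the coset decompositions (\ref{spec1}) and (\ref{spec2}) together with normality of $G_m$. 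Your other flagged point --- existence of a tempered F{\o}lner sequence of fundamental domains so that Lindenstrauss's pointwise theorem applies --- is a minor matter (pass to a tempered subsequence of the Deninger--Schmidt sequence from Proposition \ref{finite}), and indeed the paper sidesteps pointwise theorems entirely, using only the $L^1$ mean ergodic theorem plus Egorov. Your ``moreover'' paragraph is sound and matches the paper's (extreme points of a compact metrizable convex set form a $G_\delta$; periodic measures are ergodic; dense $G_\delta$ is residual), but it inherits the unproved density statement from the main gap.
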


\begin{Thm}\label{main2}
Let $\Gamma$ be a  countable discrete group and $f$ an element
of $\mathbb{Z}\Gamma$ invertible in $l^{1}(\Gamma, \mathbb{R}).$  Then the action of $\Gamma$ on $X_f$ has
specification property.

\end{Thm}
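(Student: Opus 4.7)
I would first realize $X_f$ concretely inside $\mathbb{T}^\Gamma$. Pontryagin duality identifies
\[
X_f=\bigl\{x\in\mathbb{T}^\Gamma : \textstyle\sum_{s\in\Gamma} f(s)\,x(s\gamma)\equiv 0\pmod 1\text{ for every }\gamma\in\Gamma\bigr\},
\]
with $\Gamma$ acting by shifts and the metric $\rho$ inherited from coordinatewise convergence on $\mathbb{T}^\Gamma$. Let $T\colon\ell^\infty(\Gamma,\mathbb{R})\to\ell^\infty(\Gamma,\mathbb{R})$ be the convolution operator $(T\xi)(\gamma)=\sum_s f(s)\,\xi(s\gamma)$, so that $X_f$ is the mod-$1$ reduction of $\ker T$. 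The hypothesis supplies $w\in\ell^1(\Gamma,\mathbb{R})$ with $w*f=\delta_e$, and thus an $\ell^\infty$-bounded two-sided inverse $S$ for $T$ given by $(S\xi)(\gamma)=\sum_t w(t)\,\xi(t\gamma)$.

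Given $\varepsilon>0$, choose a finite symmetric $F\subset\Gamma$ containing $\mathrm{supp}(f)$ and large enough that $\sum_{t\notin F}|w(t)|<\delta$, where $\delta=\delta(\varepsilon)$ is controlled by the modulus of continuity of $\rho$ on the finite window that essentially determines it. The candidate specification constant for $(X_f,\Gamma)$ is then built from $F$. For an $F$-separated family of finite sets $F_1,\dots,F_k\subset\Gamma$ and points $x_1,\dots,x_k\in X_f$, lift each $x_i$ coordinatewise to $\tilde x_i\colon\Gamma\to[0,1)$ and set
\[
z:=\sum_{i=1}^{k}\chi_{F_i}\,\tilde x_i\in\ell^\infty(\Gamma,\mathbb{R}),\qquad y:=z-S(Tz)\in\ell^\infty(\Gamma,\mathbb{R}).
\]
Then $Ty=Tz-T S(Tz)=0$, so the mod-$1$ reduction of $y$ lies in $X_f$.

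The shadowing estimate then runs as follows. In the interior of each $F_i$ (with respect to $\mathrm{supp}(f)$), the function $z$ coincides with $\tilde x_i$, so $(Tz)(\gamma)\equiv 0\pmod 1$ there; hence $Tz\bmod 1$ is supported in a bounded neighborhood of $\bigcup_i\partial F_i$. For $\gamma$ deep inside $F_i$, the contribution to $(S(Tz))(\gamma)=\sum_t w(t)(Tz)(t\gamma)$ coming from $t\gamma$ near $\partial F_j$ with $j\neq i$ requires $t\notin F$ by $F$-separation, and is therefore bounded by a constant multiple of $\sum_{t\notin F}|w(t)|<\delta$; the same argument bounds the self-contribution from $\partial F_i$. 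Summing over the finite window that determines $\rho$ then yields $\rho(\gamma y,\gamma x_i)<\varepsilon$ for all $\gamma\in F_i$, which is the required specification.

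\textbf{Main obstacle.} The delicate point is balancing three scales: the support of $f$ (which controls how $\partial F_i$ thickens when computing $Tz$), the finite window on which $\rho$ effectively sees a point (which forces both an enlargement of each $F_i$ and a strengthening of the separation requirement), and the $\ell^1$-tail of $w$ (which dictates how large $F$ must be). A secondary subtlety is that $y$ is a priori only a real-valued $\ell^\infty$-function, so one must check that passing to $y\bmod 1\in\mathbb{T}^\Gamma$ preserves $\varepsilon$-closeness to each $x_i$; this is fine because the estimates give genuine real closeness on each $F_i$, with no wraparound.
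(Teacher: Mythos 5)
Your construction has a fatal flaw at its core. Since $f$ is invertible in $\ell^1(\Gamma,\mathbb{R})$, the convolution operator $T$ is invertible on $\ell^\infty(\Gamma,\mathbb{R})$ and your $S$ is its exact two-sided inverse; hence $y:=z-S(Tz)=z-z=0$ identically. Your observation that $Ty=0$ is correct but vacuous: over the reals $\ker T=\{0\}$, so the phrase ``$X_f$ is the mod-$1$ reduction of $\ker T$'' is wrong --- $X_f$ consists of the mod-$1$ reductions of those $\tilde x\in\ell^\infty(\Gamma,\mathbb{R})$ for which $T\tilde x$ is \emph{integer-valued}, and this integer-valued inhomogeneity is where all the content lives. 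The zero point cannot shadow arbitrary $x_i$, so the claimed estimates cannot hold. The paper circumvents exactly this by an integer detour: the map $\xi=P\circ r_{f^{-1}}$ restricted to $\ell^\infty(\Gamma,\mathbb{Z})$ is a surjective homomorphism onto $X_f$, every $x^i$ admits an integer-valued preimage $v^i$ with $\|v^i\|_\infty\leq\frac{\|f\|_1}{2}$ (Lemma~\ref{small}), and the tracing point is $y=\xi(v)$ where $v$ is spliced from the $v^i$ on enlarged windows $F_i^{-1}W_1W_2$; the estimates then use precisely your truncated-kernel idea (the kernel $\tilde w$ obtained by restricting $f^{-1}$ to $W_2$ with small $\ell^1$-tail). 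Note you cannot rescue your version by truncating $S$ itself: then $Ty$ is no longer integer-valued and $y\bmod 1$ leaves $X_f$. The passage through $\ell^\infty(\Gamma,\mathbb{Z})$ is unavoidable, not cosmetic.

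Even with that repaired, you would have proved only half the theorem. The specification property used in this paper (following Chung--Li) contains a second clause: for any subgroup $G'$ with $FF_i\cap F_j(G'\setminus\{e_G\})=\emptyset$ the tracing point must be choosable with $sy=y$ for all $s\in G'$, and this periodicity clause is exactly what Theorem~\ref{main1} consumes to manufacture periodic measures. Your proposal never mentions it, and your construction (a single global correction of a spliced lift) gives no route to it: one needs the tracing data to be \emph{finitely supported} so that it can be summed over a $G'$-orbit. The paper's second half supplies this via expansiveness of $\alpha_f$ (equivalent to $\ell^1$-invertibility of $f$), Bryant's lemma (Lemma~\ref{exp1}) to produce homoclinic points $y^i\in\Delta(X_f)$ shadowing $x^i$ on $F_i$ and uniformly small off a controlled enlargement, the observation that $r_f\tilde y^i$ is then integer-valued with finite support, and finally the sum $\tilde z=\sum_{s\in\Gamma'}\sum_{i=1}^{m}s\,r_f\tilde y^i$, well defined because the separation hypotheses force pairwise disjoint supports, so that $y=\xi(\tilde z)$ is $\Gamma'$-fixed and still traces each $x^i$. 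Your three-scales boundary analysis corresponds roughly to the paper's Claim~1 estimates, but the homoclinic localization and the periodization over $G'$ are missing entirely.
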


\section{Preliminary}
In this section we will set up some notions and basic facts
about amenable group and residually finite group. Also
we will give the definition of specification.

\subsection{Amenable group}

A countable group $G$ is  amenable if there exists a sequence of
finite subsets $\{F_n\}_{n\in\mathbb{N}}$ satisfying
$$\lim\limits_{n\rightarrow\infty}\frac{|F_n\triangle KF_n|}{|F_n|}=0, \forall K\in \mcl{F}(G),$$
where $\mcl{F}(G)$ is the collection of all finite subsets of $G.$
Such sequences are said F{\o}lner sequences.

The quasi tiling theory is a useful tool for amenable group actions which is set up by Ornstein and Weiss in \cite{Orn}. Subsets $A_1, A_2,\cdots, A_k\in \mcl{F}(G)$ are $\var$-disjoint if there exists $\{B_1,B_2,\cdots,B_k\}\subset \mcl{F}(G)$ such that
\\$(1) B_i\subset A_i \qquad i=1,2,\cdots,k,$
\\$(2) \frac{|B_i|}{|A_i|}>1-\var\ \quad i=1,2,\cdots,k.$

\noindent For $\alpha\in (0,1],$ we say $\{A_1,A_2,\cdots,A_k\}$ $\alpha$-covers $A\in\mcl{F}(G)$ if
$$\frac{A\cap(\bigcup_{i=1}^{k}A_i)}{|A|}\geq \alpha.$$

\noindent We say that $\{A_1,A_2,\cdots,A_k\}\subset \mcl{F}(G)$ $\var-$quasi-tile $A\in \mcl{F}(G)$ if there exists $\{C_1,C_2,\cdots,C_k\}\subset \mcl{F}(G)$ satisfying
\\(1)$A_iC_i\subset A $ and $\{A_ic|c\in C_i\}$ forms an $\var-$disjoint family for $i=1,2,\cdots,k,$
\\(2)$A_iC_i\cap A_jC_j\neq \emptyset \quad 1\leq i\neq j\leq k,$
\\(3)$\{A_iC_i: i=1,2,\cdots k\}$ forms a $(1-\var)-$cover of $A.$
The subsets $C_1,C_2,\cdots,C_k$ are called the tiling centers.

The following proposition is  \cite[Theorem 2.6]{Ward}.
\begin{Prop}
Let $\{F_n\}_{n\in\mathbb{N}}$ with $\{e_G\}\in F_1\subset F_2\subset$ and $\{F_n^{\prime}\}_{n\in\mathbb{N}}$ be two F{\o}lner sequences of $G.$ Then for any $\var\in(0,\frac{1}{4})$ and $N\in\mathbb{N}$ there exists integers $n_1,n_2,\cdots,n_k$ with $n_k>n_{k-1}>\cdots>n_1>N $ such that $F_{n_1},F_{n_2},\cdots,F_{n_k}$ $\var-$quasi-tile $F_m^{\prime}$ when $m$ is large enough.
\end{Prop}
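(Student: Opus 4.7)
The plan is to follow the classical Ornstein--Weiss greedy packing strategy. First I would fix auxiliary tolerances $\delta_0, \delta_1, \ldots, \delta_{k-1} \in (0, \var)$ and an integer $k$ chosen so that $(1-\eta)^{k} < \var$ for a density increment $\eta = \eta(\var) > 0$ that will come from the one-step lemma below; this controls how much of $F_m^{\prime}$ can remain uncovered after $k$ layers of packing. Using the F{\o}lner property of $\{F_n\}$, I would extract indices $N < n_1 < n_2 < \cdots < n_k$ such that each $F_{n_{j+1}}$ is highly invariant under left multiplication by $F_{n_j}^{-1} F_{n_j}$, specifically $|F_{n_j}^{-1} F_{n_j} F_{n_{j+1}} \triangle F_{n_{j+1}}| < \delta_j |F_{n_{j+1}}|$. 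This invariance is what lets tiles of different scales eventually fit together cleanly.

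Next I would take $m$ large enough that $F_m^{\prime}$ is $(\var, F_{n_k}^{-1} F_{n_k})$-invariant, using the F{\o}lner property of $\{F_n^{\prime}\}$. The tiling centers are then built by a reverse greedy selection. At the top scale, let $C_k$ be a maximal subset of $F_m^{\prime}$ such that $F_{n_k} C_k \subset F_m^{\prime}$ and $\{F_{n_k} c : c \in C_k\}$ is $\var$-disjoint. Working in the still-uncovered set $U_{k-1} := F_m^{\prime} \setminus F_{n_k} C_k$, select a maximal $C_{k-1}$ with $F_{n_{k-1}} C_{k-1}$ fitting inside $U_{k-1}$ (up to the F{\o}lner boundary) and $\var$-disjoint among itself; continue down to $C_1$. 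Condition (1) of the quasi-tiling definition, and the mutual disjointness of the $F_{n_i} C_i$ across distinct scales (which is the intended reading of condition (2), evidently a typo for $= \emp$) follow immediately from this construction.

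The main obstacle, and the heart of the Ornstein--Weiss argument, is the density estimate: showing that $\bigcup_{j=1}^{k} F_{n_j} C_j$ is a $(1-\var)$-cover of $F_m^{\prime}$. This reduces to a one-step improvement lemma: if the uncovered region $U_j$ remaining after scales $k, k-1, \ldots, j+1$ had relative density larger than some threshold in $F_m^{\prime}$, then by counting translates of $F_{n_j}$ sitting inside $U_j$ (averaging $\mathbf{1}_{U_j}$ against $\mathbf{1}_{F_{n_j} g}$ as $g$ ranges over $F_m^{\prime}$) and using the invariance of $F_{n_j}$ with respect to smaller sets, one exhibits a translate that would fit $\var$-disjointly into $U_j$, contradicting maximality of $C_j$. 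The delicate step, where I expect most of the technical work to go, is calibrating the tolerances $\delta_j$ so that the boundary contributions from the interaction of $F_{n_j}$ with the previously packed coarser translates $F_{n_{j+1}} C_{j+1}, \ldots, F_{n_k} C_k$ do not destroy the gain $\eta$ at each stage. Iterating the lemma $k$ times yields $|U_0|/|F_m^{\prime}| \le (1-\eta)^{k} < \var$, which is exactly the $(1-\var)$-cover condition (3).
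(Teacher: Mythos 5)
The paper contains no argument for this proposition at all: it is quoted as \cite[Theorem 2.6]{Ward}, i.e.\ it is the Ornstein--Weiss quasi-tiling theorem, so there is no in-paper proof to compare yours against, and what you have written is a reconstruction of the standard proof lying behind that citation. Your skeleton is the right one: a tower $N<n_1<\cdots<n_k$ with each $F_{n_{j+1}}$ highly invariant under $F_{n_j}^{-1}F_{n_j}$, greedy maximal $\var$-disjoint packing from the largest scale downward, a double-counting lemma converting maximality into a definite per-stage covering gain $\eta=\eta(\var)$, and the iteration $(1-\eta)^k<\var$; the quantifier order is sound because $\eta$, hence $k$, depends only on $\var$. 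Two points must be read more strictly than you wrote them. First, ``fitting inside $U_{k-1}$ (up to the F{\o}lner boundary)'' cannot relax containment: the scale-$j$ translates must lie genuinely inside the uncovered remainder $U_j$, the boundary serving only to shrink the set of admissible centers; otherwise the cross-scale disjointness $F_{n_i}C_i\cap F_{n_j}C_j=\emptyset$ (condition (2), which you correctly read as a typo for $=\emptyset$) is lost. Second, in the one-step lemma the exhibited translate must be $\var$-disjoint from the translates already selected at scale $j$, not merely fit $\var$-disjointly into $U_j$: the double count of $\sum_{g}|F_{n_j}g\cap F_{n_j}C_j|$ over interior candidate centers $g$ shows that if the stage-$j$ gain were below roughly $\frac{\var}{2}|U_j|$, some admissible translate would meet $F_{n_j}C_j$ in fewer than $\var|F_{n_j}|$ points and could be added to $C_j$, which is the actual contradiction with maximality. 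This also pins down your threshold: the boundary losses are measured against $|F_m^{\prime}|$, so one needs $|U_j|>\var|F_m^{\prime}|$ and $\delta_j$ of order $\var^2$ for the interior centers to make up, say, half of $U_j$; if the uncovered density drops below $\var$ before scale $1$, one simply stops, since empty tiling centers $C_i$ are permitted. With these readings your calibration plan is exactly the bookkeeping carried out in \cite{Ward} and \cite{Orn}. Note finally that you never invoke the hypothesis $e_G\in F_1\subset F_2\subset\cdots$; this is harmless---the conclusion holds without it, which is precisely the content of the paper's Proposition \ref{prime}---though having $e_G$ in each tile does make the center counting cleaner, since then $F_{n_j}g\subset U_j$ forces $g\in U_j$.
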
\label{tile}

We will change  Proposition \ref{tile} a little bit for our use. This
is a light reformulation of \cite[Proposition 2.3]{Huang}. We describe these changes
briefly, using the same notation and terminology in \cite{Huang}.
Fix $\var\in (0,\frac{1}{4}),N\in\mathbb{N}$ and $k,\delta$ are as in \cite[Proposition 2.3]{Huang}.  Choose $n_1,n_2,\cdots,n_k$ such that
$F_{n_{i+1}}$ is $(F_{n_i}F_{n_i}^{-1},\delta)-$invariant and
$\frac{|F_{n_{i}}|}{|F_{n_{i+1}}|}<\delta, \quad i=1,2,\cdots,k-1.$
Then there is some $g_i\in G,$ satisfying $F_{n_{i}}g_i\subset
F_{n_{i+1}}.$ Denote by $h_i=g_ig_{i+1}\dots g_{k-1},\quad \text{for} \quad i=1,2\cdots,k-1$ and $h_k=e_G,$ then
$F_{n_1}h_1\subset F_{n_2}h_2 \subset\cdots\subset F_{n_k}.$
Pich $g\in G$ such that $e_G\in F_{n_1}h_1g.$ Denote by
$\widetilde{{F}}_{n_i}=F_{n_i}h_ig.$ Then $e_G\in
\widetilde{{F}}_{n_1}\subset \widetilde{{F}}_{n_2}\subset\cdots
\widetilde{{F}}_{n_k}.$ From proposition 2.3 in \cite{Huang}, we know $F_m^{\prime}$ can be $\var-$quasi tiled by $\widetilde{{F}}_{n_1},\widetilde{{F}}_{n_2},\cdots
\widetilde{{F}}_{n_k}$ when $m$ is sufficiently large. Since $\widetilde{{F}}_{n_i}$ is just a translation
of $F_{n_i}$ for $i=1,2,\cdots,k,$ so $F_m^{\prime}$ can be $\var-$quasi-tiled
by ${{F}}_{n_1},{{F}}_{n_2},\cdots,{{F}}_{n_k}.$
Hence we get the next proposition.
\begin{Prop}\label{prime}
Let $\{F_n\}_{n\in\mathbb{N}}$ and $\{F_n^{\prime}\}_{n\in\mathbb{N}}$ be two F{\o}lner sequences of $G.$ Then for any $\var\in(0,\frac{1}{4})$ and $N\in\mathbb{N}$ there exists integers $n_1,n_2,\cdots,n_k$ with $n_k>n_{k-1}>\cdots>n_1>N $ such that $F_{n_1},F_{n_2},\cdots,F_{n_k}$ $\var-$quasi-tile $F_m^{\prime}$ when $m$ is large enough.
\end{Prop}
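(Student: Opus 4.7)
The plan is to reduce the statement to the already-cited nested case in Proposition \ref{tile} by exploiting translation invariance of the quasi-tiling notion. The text preceding the proposition already sketches the argument; my proof would simply make the three steps of that sketch explicit and verify that translations can be absorbed into the tiling centers without harm.

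First I would fix $\varepsilon\in(0,\tfrac14)$ and $N\in\mathbb{N}$, and invoke the parameters $k$ and $\delta$ provided by \cite[Proposition 2.3]{Huang}. Using that $\{F_n\}$ is a F{\o}lner sequence, I would inductively select indices $N<n_1<n_2<\cdots<n_k$ so that for each $i<k$ the set $F_{n_{i+1}}$ is $(F_{n_i}F_{n_i}^{-1},\delta)$-invariant and $|F_{n_i}|/|F_{n_{i+1}}|<\delta$. This provides exactly the quantitative input needed by Huang's argument, and it does not require the $F_n$ themselves to be nested or to contain $e_G$.

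Second I would manufacture a nested, pointed sibling sequence by translation. Because $F_{n_{i+1}}$ is $(F_{n_i}F_{n_i}^{-1},\delta)$-invariant and $F_{n_i}$ is comparatively small, there exists $g_i\in G$ with $F_{n_i}g_i\subset F_{n_{i+1}}$. Setting $h_i=g_i g_{i+1}\cdots g_{k-1}$ for $i<k$ and $h_k=e_G$, one obtains $F_{n_1}h_1\subset F_{n_2}h_2\subset\cdots\subset F_{n_k}$. A final right-translation by some $g$ with $e_G\in F_{n_1}h_1g$ gives $\widetilde{F}_{n_i}:=F_{n_i}h_ig$, a nested sequence containing the identity. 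Applying Proposition \ref{tile} to the sequence $\{\widetilde{F}_{n_i}\}$ against the F{\o}lner sequence $\{F_m'\}$, for $m$ large enough I obtain tiling centers $\widetilde{C}_1,\ldots,\widetilde{C}_k$ such that $\{\widetilde{F}_{n_i}\widetilde{C}_i\}$ $\varepsilon$-quasi-tiles $F_m'$.

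Third I would transfer the quasi-tiling back to the original $F_{n_i}$. The crucial observation is the identity
\[
\widetilde{F}_{n_i}\widetilde{C}_i=F_{n_i}\bigl(h_i g\widetilde{C}_i\bigr),
\]
so taking $C_i:=h_i g\widetilde{C}_i$ gives the same set. Each of the three defining conditions of an $\varepsilon$-quasi-tiling (namely $F_{n_i}C_i\subset F_m'$, the $\varepsilon$-disjointness of the left-translates $\{F_{n_i}c:c\in C_i\}$ with the same certifying subsets $B_j$ right-translated accordingly, and the $(1-\varepsilon)$-cover property) depends only on the sets $F_{n_i}c$ as $c$ varies over $C_i$, not on how $c$ was written as $h_i g c'$. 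Hence all three conditions transfer verbatim, yielding that $F_{n_1},\ldots,F_{n_k}$ $\varepsilon$-quasi-tile $F_m'$.

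There is no genuine obstacle in this argument; the only point that requires care is the bookkeeping in step three, namely verifying that the right-translations $h_i g$ introduced to nest the $\widetilde{F}_{n_i}$ really can be absorbed entirely into the tiling centers without perturbing $\varepsilon$-disjointness or the cover fraction. This is automatic once one notes that quasi-tiling is defined purely in terms of the concrete subsets $F_{n_i}C_i$ and their internal $\varepsilon$-disjoint decompositions, both of which are preserved under the right-translation $C_i\mapsto h_i g\widetilde{C}_i$.
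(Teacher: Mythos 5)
Your argument is correct and essentially identical to the paper's own: the paper likewise fixes $k,\delta$ from \cite[Proposition 2.3]{Huang}, selects $n_1<\cdots<n_k$ with the same $(F_{n_i}F_{n_i}^{-1},\delta)$-invariance and size-ratio conditions, nests the sets via right-translations $\widetilde{F}_{n_i}=F_{n_i}h_ig$ with $e_G\in\widetilde{F}_{n_1}$, applies the nested-case result, and concludes by noting that each $\widetilde{F}_{n_i}$ is just a translate of $F_{n_i}$. Your third step merely makes explicit, through $C_i=h_ig\widetilde{C}_i$, the absorption of the translations into the tiling centers that the paper asserts in a single line.
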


For our proof, we also need the Mean Ergodic Theorem for
amenable group actions . Readers may refer \cite[Theorem 1.1]{Lin1}.

\begin{lem}[Mean Ergodic Theorem]
  Let $G$ be an amenable group acting on a measure space $(X,\mathcal{B},\mu)$ by measure preserving transformation, and let $\{F_n\}_{n\in \mathbb{N}}$ be a  F{\o}lner sequence. Then for any $f \in L^{1}(\mu)$, there is a $G$-invariant $\overline{f}\in L^{1}(\mu)$ such that $$\lim\limits_{n\rightarrow\infty}\frac{1}{|F_n|}\sum\limits_{g\in F_n}f(gx)=\overline{f}(x)\quad \text{in}\quad L^{1}.$$
  Moreover $\int f(x) d\mu=\int \overline{f}(x) d\mu.$
 In particular, if the $G$-action is ergodic,
 $$\lim\limits_{n\rightarrow\infty}\frac{1}{|F_n|}\sum\limits_{g\in F_n}f(gx)=\int f(x){\rm d}\mu(x) \quad \text{in}\quad L^{1}.$$ \label{ergodic theorem}
 \end{lem}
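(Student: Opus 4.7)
The plan is to follow the classical von Neumann-Riesz route: prove the convergence first in $L^2(\mu)$ via a Hilbert space decomposition, then bootstrap to $L^1(\mu)$ by density plus a uniform contraction bound, and finally read off the ergodic case.

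\textbf{Step 1 (Reduction to $L^2$ and Hilbert space setup).} Let $U_g\colon L^2(\mu)\to L^2(\mu)$ be the unitary operator $U_gf(x)=f(g^{-1}x)$ (unitary because $\mu$ is $G$-invariant). Let $H_{\mathrm{inv}}=\{f\in L^2(\mu):U_gf=f\ \forall g\in G\}$ and $P$ the orthogonal projection onto it. Define $A_nf=\frac{1}{|F_n|}\sum_{g\in F_n}U_gf$. The first task is to show $A_nf\to Pf$ in $L^2$ for every $f\in L^2(\mu)$.

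\textbf{Step 2 (Key $L^2$ estimate via the F\o{}lner property).} Decompose $L^2=H_{\mathrm{inv}}\oplus H_{\mathrm{inv}}^\perp$. If $f\in H_{\mathrm{inv}}$, then $A_nf=f=Pf$ identically. For the orthogonal complement, I would invoke the fact that $H_{\mathrm{inv}}^\perp$ is the closed linear span of the set of coboundaries $\{U_gh-h:g\in G,\ h\in L^2\}$ (this is the standard Hilbert space mean-ergodic identity: a vector orthogonal to every coboundary is fixed by each $U_g$). For a single coboundary $f=U_gh-h$,
\[
A_nf=\frac{1}{|F_n|}\Bigl(\sum_{s\in F_ng}U_sh-\sum_{s\in F_n}U_sh\Bigr),
\]
so $\|A_nf\|_2\le \frac{|F_n\triangle F_ng|}{|F_n|}\,\|h\|_2\to 0$ by the F\o{}lner property. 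A $3\varepsilon$-argument, using the uniform bound $\|A_n\|\le 1$, extends this to all of $H_{\mathrm{inv}}^\perp$, proving $A_nf\to 0=Pf$ there. The main obstacle is exactly this coboundary step: once one has the identification of $H_{\mathrm{inv}}^\perp$ and the F\o{}lner cancellation, everything else is bookkeeping.

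\textbf{Step 3 (Passage to $L^1$).} For $f\in L^1(\mu)$ and any $g\in G$, measure-preservation gives $\|U_gf\|_1=\|f\|_1$, hence $\|A_nf\|_1\le\|f\|_1$; so $A_n$ is an $L^1$-contraction. Since $\mu$ is finite, $L^2(\mu)\cap L^\infty(\mu)$ is dense in $L^1(\mu)$. Given $f\in L^1$ and $\eta>0$, pick $f'\in L^2\cap L^\infty$ with $\|f-f'\|_1<\eta$, apply Step 2 to get $A_nf'\to Pf'$ in $L^2$ (hence in $L^1$ since $\mu$ is finite), and conclude by the contraction bound that $\{A_nf\}$ is $L^1$-Cauchy. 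Call its limit $\overline f$; then $\overline f$ is $G$-invariant (any $U_h$ shifts $F_n$ to $hF_n$, and $|hF_n\triangle F_n|/|F_n|\to 0$). The identity $\int\overline f\,d\mu=\int f\,d\mu$ comes from integrating $A_nf$ term by term and using measure-preservation.

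\textbf{Step 4 (Ergodic case).} If the action is ergodic, any $G$-invariant $L^1$ function is $\mu$-a.e.\ constant. Applying Step 3 and matching integrals, the constant must equal $\int f\,d\mu$, giving $\overline f\equiv\int f\,d\mu$ in $L^1$, which is exactly the stated conclusion.
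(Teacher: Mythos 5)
Your argument is sound, but note first that the paper itself gives no proof of this lemma at all: it is stated with a pointer to \cite[Theorem 1.1]{Lin1}, so your von Neumann--style proof is a self-contained substitute rather than a parallel of anything in the paper. Your route --- the splitting $L^2=H_{\mathrm{inv}}\oplus H_{\mathrm{inv}}^{\perp}$ with $H_{\mathrm{inv}}^{\perp}=\overline{\mathrm{span}}\{U_gh-h\}$, F{\o}lner cancellation on coboundaries, the uniform contraction bound, and density to pass to $L^1$ --- is the standard and most elementary proof of the mean theorem, and it makes visible something the citation obscures: the mean theorem needs only the F{\o}lner property, whereas the pointwise theorem actually proved in \cite{Lin1} requires tempered F{\o}lner sequences. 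One bookkeeping slip you should repair, though it is not a conceptual gap: with your convention $U_gf(x)=f(g^{-1}x)$ one has $U_sU_g=U_{sg}$, so your coboundary computation produces the right translate $F_ng$ and thus invokes $|F_ng\triangle F_n|/|F_n|\to 0$, while the paper defines F{\o}lner sequences by the \emph{left} condition $|KF_n\triangle F_n|/|F_n|\to 0$; moreover your $A_nf=\frac{1}{|F_n|}\sum_{g\in F_n}f(g^{-1}x)$ averages over $F_n^{-1}$, not over $F_n$ as in the statement. Both slips cancel if you work directly with $S_nf(x)=\frac{1}{|F_n|}\sum_{g\in F_n}f(gx)$: then the coboundary estimate involves $|F_n^{-1}g\triangle F_n^{-1}|=|g^{-1}F_n\triangle F_n|$, which is exactly what the paper's left F{\o}lner condition controls. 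Likewise, the $G$-invariance of $\overline f$ is obtained more cleanly not from your shifting argument (which needs the other-sided condition) but from the observation that the $L^2$-limit is $Pf\in H_{\mathrm{inv}}$, already invariant, and invariance passes to $L^1$-limits since each $U_h$ is an $L^1$-isometry. Finally, you correctly (if tacitly) assume $\mu$ is finite --- the statement's normalizations $\int f\,d\mu=\int\overline f\,d\mu$ and the ergodic conclusion presuppose a probability measure, which is the setting of the paper --- and with that and the convention fixed, every remaining step (the identification of $H_{\mathrm{inv}}^{\perp}$, the $3\varepsilon$ argument, the termwise integration, and the ergodic case) is correct.
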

For more information about ergodic theorem, readers may refer
\cite{Lin} or \cite[Chapter 8]{Ein}.

\subsection{Residually finite group}

A group is residually finite if the intersection of all its normal subgroups of finite index is trivial. Examples of groups that are residually finite are finite groups, free groups, finitely generated nilpotent groups, polycyclic-by-finite groups, finitely generated linear groups, and fundamental groups of 3-manifolds. For more information
about residually finite groups readers can refer \cite[Chapter 2]{Cec}.

Let $(G_n,n\geq 1)$ be a sequence of finite index normal subgroups in $G.$ We say
$$\lim_{n\rightarrow \infty} G_n=\{e_G\}$$
if we can find, for any $K\in\mcl{F}(G),$ an $N>1$ with $G_n\cap(K^{-1}K)=\{e_G\}$ for every $n\geq N.$ Clearly such sequence exists if $G$ is countable and residually finite.

If $G^{\prime}\subset G$ is a subgroup with finite index,
and $Q\subset G$ is a {\it fundamental domain} of the right coset
space $G/G^{\prime},$ i.e. a finite subset such that
$\{sQ\,|\,s\in G^{\prime}\}$ is a partition of $G.$

The following proposition is \cite[Corollary 5.6]{Den} and we will use
it to control the periodic orbits we get.
\begin{Prop}\label{finite}
Let $G$ be a countable discrete residually finite amenable group and let $(G_n,n\geq 1)$ be a sequence of finite index normal subgroup with $\lim_{n\rightarrow \infty}G_n=\{e_G\}.$ Then there exists a F{\o}lner sequence $(Q_n,n\geq 1)$ such that $Q_n$ is a fundamental domain of $G/G_n$ for every $n\geq 1.$

\subsection{Specification}

Specification is an orbit tracing property which is very useful in finding periodic orbits. In \cite{Bowen}, Bowen was the first to state this notion and Ruelle investigated the extension of this notion to $\mathbb{Z}^{d}$-action in \cite{Rue}. Chung and Li again generalized the property to general group actions in \cite{Chung}.

Let $\alpha$ be a continuous $G-$action on a compact metric space $X$ with metric $\rho.$ The action has specification property if there exist, for every $\varepsilon>0,$ a nonempty finite subset $F=F(\var)$ of $G$ with the following property :
\\for any finite collection of finite subsets $F_1, F_2,\cdots, F_m$ of $G$ satisfying
\begin{align}
FF_i\cap F_j=\emptyset \qquad 1\leq i\neq j\leq m \label{specification1}
\end{align}
and for any subgroup $G^{\prime}$ of $G$ with
\begin{align}
FF_i\cap F_j(G^{\prime}\setminus \{e_G\})=\emptyset \quad\text{for}\quad 1\leq i, j\leq m. \label{specification2}
\end{align}
Then for any collection of points $x^1,x^2,\dots,x^m \in X$
there is a point $y\in X$ satisfying
\begin{align}
\rho(sx^{i},sy)\leq \var \quad \text{for all}\quad s\in F_i,\ 1\leq i\leq m \label{con1}
\end{align}
and $sy=y$ for all $s\in G^{\prime}.$

\section{Proof of Theorem}
Let $(G_n,n\geq 1)$ be a sequence of finite index normal subgroup
with $\lim\limits_{n\rightarrow\infty}G_n=\{e_G\}$
and a F{\o}lner sequence $(Q_n, n\geq 1)$ such that $Q_n$ is a fundamental domain of $G/G_n$ as described in Proposition \ref{finite}.

Let $\nu\in \mcl{M}(X,G), \var >0$ and $W$ a finite subset of $C(X).$ Uniformly continuity of the elements of $W$ implies that there is $\delta\in(0,\var)$ such that $|\xi(x)-\xi(y)|<\frac{\var}{8}$ for all $x,y\in X$ with $d(x,y)<\delta$ for all $\xi\in W.$

Define
$$Q(G)=\{x\in X: \lim\limits_{n\rightarrow\infty}\frac{1}{|Q_n|}\sum\limits
_{g\in Q_n}\xi(gx) \quad\text{exists for all}\quad \xi\in C(X)\}$$

We call $Q(G)$ the basin of $\nu.$ By Birkhorff Ergodic theorem, $\nu(Q(G))=1.$ Denote by $\xi^{*}(x)$  the limit for each $x\in Q(G).$

Moreover,
$$\int_{Q(G)}\xi^{*}(x)d\nu=
\int_{Q(G)}\xi(x)d\nu.$$

\noindent Next we will construct a finite partition as following:

\noindent Set $D=\max\limits_{\xi\in W} \|\xi\|_{\infty}$

\noindent For $j=1,2,\cdots,[\frac{16D}{\var}]+1$
define
$$Q_j(\xi)=\{x\in Q(G)|
-D+\frac{j-1}{8}\var\leq \xi^{*}(x)<-D+\frac{j}{8}\var\}.$$

\noindent Since $W$ is finite
$$\eta:=\bigvee\limits_{\xi\in W}\{Q_1(\xi),\cdots,
Q_{[\frac{8D}{\var}]+1}(\xi)\}$$
is a finite partition of $Q(G),$ where
 $\alpha\vee\beta=\{A_i\cap B_j|A_i\in\alpha, B_j\in \beta\}$ for partitions $\alpha=\{A_i\}$ and $\beta=\{B_j\}$

Next we will construct $F_1,F_2,\cdots,F_t$ and $G_m$ satisfying (\ref{specification1}) and (\ref{specification2}) in specification property. The idea is from \cite{Zheng} but some little modifications. Suppose
$\eta=\{A_1,A_2,\cdots,A_l\}.$ Let $a_i=\nu(A_i)$ for $i=1,2,\cdots,l$
and $a=\min{\{a_i:i=1,2,\cdots,l\}}.$ By Egorov's Theorem, there exist a Borel subset
$X^{\prime}\subset X$ with $\nu(X^{\prime})>1-\frac{1}{4}a$ and
$\frac{1}{|Q_n|}\sum\limits_{g\in Q_n}\xi(gx)$ converges to $\xi^{*}(x)$
uniformly on $X^{\prime}.$

Take $0<\gamma<\min{\{\frac{\delta}{16l},\frac{\delta}{8D|F|},\frac{1}{2},\frac{\delta}{16Dl}\}}$ and $F=F(\gamma)$ as in the specification property. Take $N_1\in\mathbb{N}$ such that for $\forall \ x\in X^{\prime},$
 $|\frac{1}{|Q_n|}\sum\limits_{g\in Q_n}\xi(gx)-\xi^{*}(x)|<\frac{1}{8}\var,$ for all $n\geq N_1.$
   Take $N_2>N_1$ large enough s.t. $\frac{|gQ_n\triangle Q_n|}{|Q_n|}<\frac{\gamma}{4|F|^{2}l}$ for all $g\in F,\quad n\geq N_2.$
   By Proposition \ref{prime}
 there exist $n_k>n_{k-1}>\cdots>n_1>N_2$ and $N_3\in \mathbb{N}$ s.t. $Q_m$ can be $\frac{\gamma}{4|F|^2l}-$quasi-tile by $Q_{n_1},Q_{n_2},\cdots,Q_{n_k}$ when $m>N_3.$ Also $N_3$ will be large enough such that the family of all the translations
 $$\mcl{F}=\{Q_{n_j}c_j: 1\leq i\leq k, c_j\in C_j\}$$
 can be partitioned into $l$ subfamilies $\mcl{F_i},\quad 1\leq i\leq l$ satisfying
 $$|\frac{|\mcl{F}_i|}{|Q_m|}-a_i|<
 \frac{\gamma}{l}$$
where \quad $\bigcup\mcl{F}_i=\bigcup \limits_{Q_{n_j}c_j\in \mcl{F}_i}\{F_{n_j}c_j\}.$
Moreover the elements in $\mcl{F}$ are
pairwise $\frac{\gamma}{4|F|^2l}-$disjoint. We can choose
$\{\widetilde{T_{n_j}}(c_j)c_{j}\subset Q_{n_j}c_j\in \mcl{F}\}$ are pairwise disjoint and $\frac{|\widetilde{T_{n_j}}(c_j)|}{|F_{n_j}|}>1-\frac{\gamma}{4|F|^2l}.$
Denote $$T_{n_j}(c_j)=\{s\in \widetilde{T}_{n_j}(c_j)|Fs\subset Q_{n_j}\}$$
and
$$S_{n_j}(c_j)=T_{n_j}(c_j)\cap FT_{n_j}(c_j).$$
By the definition, we know
$\frac{|Q_{n_j}\setminus T_{n_j}(c_j)|}{|Q_{n_j}|}<\frac{\gamma}{2|F|}$
and $\frac{|Q_{n_j}\setminus S_{n_j}(c_j)|}{|Q_{n_j}|}<\gamma.$
Denote $\widetilde{\mcl{F}}=\{S_{n_j}(c_j)c_j|S_{n_j}(c_j)c_j\subset Q_{n_j}c_j\in \widetilde{F}\}$
and $\widetilde{\mcl{F}}_i=\{S_{n_j}(c_j)c_j|Q_{n_j}c_j \in \mcl{F}_i\}$

\noindent{\bf Claim:} $\{S_{n_j}(c_j)c_j|S_{n_j}(c_j)c_j\in \widetilde{\mcl{F}}\}$ and $G_m$ satisfy
the conditions in specification property.

\noindent For different $S_{n_{j_1}}(c_{j_1})c_{j_1}$ and $S_{n_{j_2}}(c_{j_2})c_{j_2}$
we have
$$(S_{n_{j_1}}(c_{j_1})c_{j_1}\cap FS_{n_{j_2}}(c_{j_2})c_{j_2})\subset
(S_{n_{j_1}}(c_{j_1})c_{j_1}\cap T_{n_{j_2}}(c_{j_2})c_{j_2})\subset
(\widetilde{T}_{n_{j_1}}(c_{j_1})c_{j_1}\cap \widetilde{T}_{n_{j_2}}(c_{j_2})c_{j_2}
)=\emptyset
$$

\noindent By Proposition \ref{finite}, we know
\begin{align}
G
&= \bigsqcup\limits_{g\in Q_n}gG_{n} \label{spec1}\\
&= \bigsqcup\limits_{g\in Q_n}(g\bigsqcup g(G_{n}\setminus \{e_G\}))\label{spec2}
\end{align}
For different $S_{n_{j_1}}(c_{j_1})c_{j_1}$ and $S_{n_{j_2}}(c_{j_2})c_{j_2}$,
since $FS_{n_{j_1}}(c_{j_1})c_{j_1}\cap S_{n_{j_2}}(c_{j_2})c_{j_2}=\emptyset$,
by (\ref{spec1}), $FS_{n_{j_1}}(c_{j_1})c_{j_1}\cap S_{n_{j_2}}(c_{j_2})c_{j_2}G_m = \emptyset$;
for the same $S_{n_{j_1}}(c_{j_1})c_{j_1}$ and $S_{n_{j_2}}(c_{j_2})c_{j_2}$,
by (\ref{spec2}), we get $FS_{n_{j_1}}(c_{j_1})c_{j_1}\cap S_{n_{j_2}}(c_{j_2})c_{j_2}(G_m\setminus{\{e_G\}}) = \emptyset.$
We get that $\{S_{n_j}(c_j)c_j\}$ and $G_m$
satisfy the conditions (\ref{specification1}) and (\ref{specification2}) in the specification property.

Next we will construct the periodic measure.

For $S_{n_j}(c_j)c_j\in \widetilde{\mcl{F}_i},$ pick some $x$ related to
$S_{n_j}(c_j)c_j$ denoted by $x_j(c_j)$ satisfying
$x_j(c_j)\in c_j^{-1}A_j\cap X^{\prime}.$ Using the specification property,
there is some $y$ such that $\rho(gx_j(c_j),gy)<\gamma<\delta, \forall g\in S_{n_j}(c_j)c_j.$ Denote
$$\mu_y=\frac{1}{|\widetilde{\mcl{F}}|}
(\sum\limits_{i=1}^{l}\sum\limits_{S_{n_j}(c_j)c_j\in\cup\widetilde{\mcl{F}}_i}
\,\sum\limits_{g\in S_{n_j}(c_j)c_j}\delta_{gy})$$
where $\delta_y$ is the Dirac measure on $y.$

\noindent {\bf Claim:} $|\int\xi d\nu-\int\xi d\mu_y|<\var,\quad \quad\text{for all} \quad\xi\in W$
\begin{align}
|\int\xi d\nu-\int\xi d\mu_y|
&=|\int\xi^{*}(x) d\nu-\int\xi(x) d\mu_{y}| \nonumber\\
&= |\int\xi^{*}(x) d\nu-\frac{1}{|\cup\widetilde{\mcl{F}}|}
\sum\limits_{i=1}^{l}\sum\limits_{S_{n_j}(c_j)c_j\in \widetilde{\mcl{F}}_i}
\sum\limits_{g\in S_{n_j}(c_j)c_j}\xi(gy)| \nonumber\\
&\leq |\int\xi^{*}(x) d\nu-\frac{1}{|\cup\widetilde{\mcl{F}}|}
\sum\limits_{i=1}^{l}\sum\limits_{S_{n_j}(c_j)c_j\in \widetilde{\mcl{F}}_i}
\sum\limits_{g\in S_{n_j}(c_j)c_j}\xi(gx_{n_j}(c_j))|+\frac{\var}{8}\nonumber\\
&\leq |\int\xi^{*}(x) d\nu-\sum\limits_{i=1}^{l}\sum\limits_{S_{n_j}(c_j)c_j\in \widetilde{\mcl{F}}_i}
\frac{|S_{n_j}(c_j)|}{|\cup\widetilde{\mcl{F}}|}
\frac{1}{|S_{n_j}(c_j)|}\sum\limits_{g\in S_{n_j}(c_j)c_j}\xi( gx_j(c_j))|+\frac{\var}{8}\label{e1}
\end{align}

Pick $x_i\in A_i\cap X^{*},$ then $|\xi^*(x)-\xi^*(x_i)|<\frac{\var}{8}$ for
all $x\in A_i$
by the construction of $\eta.$
Then
\begin{align}
|\frac{1}{|Q_{n_j}|}\sum\limits_{g\in Q_{n_j}}\xi(gc_jx_j(c_j))-
\frac{1}{|Q_{n_j}|}\sum\limits_{g\in Q_{n_j}}\xi(gx_i)|
&\leq |\xi^{*}(c_jx_j(c_j))-\xi^{*}(x_i)|+\frac{\var}{4}\nonumber\\
&\leq \frac{3}{8}\var \label{no}
\end{align}
Also
\begin{align}
&|\frac{1}{|S_{n_j}(c_j)|}\sum\limits_{g\in S_{n_j}}\xi(gc_jx_j(c_j))- \frac{1}{|Q_{n_j}|}\sum\limits_{g\in Q_{n_j}}\xi(gc_jx_j(c_j))| \nonumber\\
&\leq |\frac{1}{|S_{n_j}(c_j)|}\sum\limits_{g\in S_{n_j}(c_j)}\xi(gc_jx_j(c_j))
-\frac{1}{|S_{n_j}(c_j)|}\sum\limits_{g\in Q_{n_j}}\xi(gc_jx_j(c_j))|\nonumber\\
&+ |\frac{1}{|S_{n_j}(c_j)|}\sum\limits_{g\in Q_{n_j}}\xi(gc_jx_j(c_j))
-\frac{1}{|Q_{n_j}|}\sum\limits_{g\in Q_{n_j}}\xi(gc_jx_j(c_j))| \nonumber\\
&\leq \frac{1}{|S_{n_j}(c_j)|}D|Q_{n_j}\setminus S_{n_j}(c_j)|
+\frac{|Q_{n_j}|D}{|S_{n_j}(c_j)||Q_{n_j}|}(|Q_{n_j}|-|S_{n_j}(c_j)|)\nonumber\\
&\leq \frac{2D\gamma}{1-\gamma}\nonumber\\
&\leq \frac{1}{4}\var\label{e2}
\end{align}

Taking (\ref{no}) and (\ref{e2}) into (\ref{e1}), we have,
\begin{align}
|\int\xi d\nu-\int\xi d\mu_y|
&\leq |\int\xi^{*} d\nu-\sum\limits_{i=1}^{l}\sum\limits_{S_{n_j}c_j\in\widetilde{\mcl{F}}_i}
\frac{|S_{n_j}(c_j)|}{|\cup \widetilde{\mcl{F}}|}\frac{1}{|Q_{n_j}|}\sum\limits_{g\in Q_{n_j}}
\xi(gc_jx_j(c_j))|+\frac{3}{8}\var \nonumber\\
&\leq |\int\xi^{*}(x) d\nu-\sum\limits_{i=1}^{l}\sum\limits_{S_{n_j}(c_j)c_j\in\widetilde{\mcl{F}}_i}
\frac{|S_{n_j}(c_j)|}{|\cup\widetilde{\widetilde{F}}|}\xi^{*}(c_jx_j(c_j))|+\frac{1}{2}\var\nonumber\\
&\leq |\int\xi^{*}(x) d\nu-\sum\limits_{i=1}^{l}a_i\xi^{*}(x_i)|+\frac{3}{4}\var\nonumber\\
&\leq |\sum\limits_{i=1}^{l}\int_{A_i}\xi^{*}(x) -\xi^{*}(x_i) d\nu|+\frac{7}{8}\var\nonumber\\
&\leq \var
\end{align}

For the moreover part, we just need the following claim.\\
{\bf Claim:} Suppose $(K,\rho)$ is a convex compact metric set and
denote the set of extreme points of $K$ by $ext(K).$ Then $ext(K)$
is a $G_\delta$ subset of $K.$\\
Let
$$K_n=\{x\in K: \ \text{there exist}\  y,z\in K\ \text{such that}\ x=\frac{1}{2}(y+z) \ \text{and} \ d(y,z)\geq\frac{1}{n}\}.$$
Obviously, $K_n$ is closed. Let $K_0=\bigcup\limits_{n\geq 1}K_n$
and $K_0$ is a $F_\sigma$ subset.
It is easy to check
$$x\notin ext(K) \Leftrightarrow x\in K_0.$$
As a result, $ext(K)=K\setminus K_0$ is a $G_\delta$ subset of
$K.$
We know that $\mcl{M}_e(X,G)$ is the set of extreme points of
$\mcl{M}(X,G).$ So by the claim,  $\mcl{M}_e(X,G)$ is a $G_\delta$
set. Combined with periodic measure is ergodic, we get the moreover
part.

So we prove the Theorem 1.1.\qed
\end{Prop}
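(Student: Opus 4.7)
The plan is to construct the F\o{}lner sequence $(Q_n)$ by taking a pre-existing F\o{}lner sequence of $G$ and modifying each term into an exact fundamental domain of $G/G_n$, using the assumption $\lim_{n\to\infty}G_n=\{e_G\}$ to ensure the modifications are negligible.

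First I would fix any F\o{}lner sequence $(\tilde F_m)_{m\geq 1}$ of $G$ with $e_G\in\tilde F_m$; such a sequence exists by the amenability of $G$. For each $n$, the hypothesis $\lim_{n\to\infty}G_n=\{e_G\}$ supplies an index $m(n)$ with the property that $\tilde F_{m(n)}^{-1}\tilde F_{m(n)}\cap G_n=\{e_G\}$. This injectivity means that the quotient map $\pi_n:G\to G/G_n$ restricts to an injection on $\tilde F_{m(n)}$, so $\tilde F_{m(n)}$ may be identified with a subset of $G/G_n$ of size $|\tilde F_{m(n)}|$.

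Next, I would complete $\tilde F_{m(n)}$ to a fundamental domain $Q_n$ by picking, for each of the $[G:G_n]-|\tilde F_{m(n)}|$ cosets of $G_n$ not met by $\tilde F_{m(n)}$, an arbitrary representative. The resulting $Q_n$ is by construction a transversal of $G/G_n$. To verify that $(Q_n)$ is F\o{}lner, I would argue that $|Q_n\triangle\tilde F_{m(n)}|/|Q_n|$ is small when $m(n)$ grows appropriately with $n$; the F\o{}lner property for $(Q_n)$ then follows from that of $(\tilde F_{m(n)})$ since the extra elements contribute only a vanishing fraction to $|KQ_n\triangle Q_n|/|Q_n|$ for each finite $K\subset G$.

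The main obstacle is the delicate interplay between two competing demands on $m(n)$: on one hand, $m(n)$ must be small enough that $\tilde F_{m(n)}^{-1}\tilde F_{m(n)}\cap G_n=\{e_G\}$, which caps $|\tilde F_{m(n)}|\leq [G:G_n]$; on the other hand, the ratio $|\tilde F_{m(n)}|/[G:G_n]$ must tend to $1$ so that the fundamental-domain completion is asymptotically invisible. The growth of $|\tilde F_m|$ (dictated by the F\o{}lner sequence) and the growth of $[G:G_n]$ (dictated by the subgroup filtration) need not be comparable \emph{a priori}, so the proof will likely require tuning the F\o{}lner sequence to the filtration $(G_n)$—for example, by invoking the quasi-tiling machinery of Proposition~\ref{prime} to manufacture F\o{}lner sets of the right sizes out of $G_n$-invariant building blocks—or by refining the sequence $(G_n)$ using residual finiteness. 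This balancing act between the injectivity constraint and the size-matching constraint is the heart of the argument.
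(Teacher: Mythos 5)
Your plan stalls exactly where you say it does, and that obstacle is not a technicality but the entire content of the proposition. Writing $Q_n=\tilde F_{m(n)}\sqcup R_n$, where $R_n$ consists of one arbitrary representative for each coset missed by $\tilde F_{m(n)}$, the F{\o}lner property of $(Q_n)$ requires $|R_n|/[G:G_n]\to 0$, and nothing in your construction provides this. The injectivity constraint $\tilde F_{m(n)}^{-1}\tilde F_{m(n)}\cap G_n=\{e_G\}$ only caps $|\tilde F_{m(n)}|$ by $[G:G_n]$, and a diagonal choice of $m(n)$ guarantees at best $m(n)\to\infty$, never that $|\tilde F_{m(n)}|/[G:G_n]\to 1$. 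For a fixed F{\o}lner sequence the cardinalities $|\tilde F_m|$ can be extremely sparse (say of order $2^{2^m}$) while the indices $[G:G_n]$ fall in the gaps, so $R_n$ can be a non-vanishing, even dominant, fraction of $Q_n$; since its elements are chosen arbitrarily, $|KQ_n\triangle Q_n|$ can then be of order $|Q_n|$ for a fixed finite $K\subset G$, and the sequence fails to be F{\o}lner. No tuning of $m(n)$ rescues the single-set completion: one almost-invariant set simply cannot be expected to nearly fill an arbitrary finite quotient, so the ``balancing act'' you identify is genuinely unresolvable within your scheme.

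The repair you gesture at --- quasi-tiling --- is in fact the actual argument, and it replaces rather than supplements the single-set scheme. Note first that the paper does not prove Proposition \ref{finite} at all: it quotes it from Deninger and Schmidt \cite[Corollary 5.6]{Den} (which in turn rests on Weiss's monotileability of residually finite amenable groups). The mechanism there is: given a finite $K\subset G$ and $\var>0$, use Ornstein--Weiss (in the form of Proposition \ref{prime}) to obtain finitely many F{\o}lner sets $F_{n_1},\dots,F_{n_k}$, each $(K,\delta)$-invariant, which $\var$-quasi-tile every sufficiently invariant set; for $n$ large, $\lim_{n\to\infty}G_n=\{e_G\}$ makes the quotient map injective on each $F_{n_j}F_{n_j}^{-1}$, so the quasi-tiling can be transported into the finite group $G/G_n$ with tiling centres among coset representatives. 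The fundamental domain $Q_n$ is then a disjoint union of translates of large subsets of the $F_{n_j}$, covering at least a $(1-\var)$-fraction of the cosets, together with one arbitrary representative per uncovered coset. Almost-invariance of $Q_n$ holds because it is, up to proportion $\var$, a disjoint union of individually $(K,\delta)$-invariant translates --- precisely the size-matching mechanism your sketch lacks, obtained by using many small tiles instead of one large F{\o}lner set. As it stands, your proposal is a correct reduction plus an accurate diagnosis of the gap, but not a proof.
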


\section{Proof of Theorem 1.2}
For a countable group $\Gamma$ and an element  $f=\sum f_s s$ in the
integral group ring $\mathbb{Z}\Gamma,$ consider the quotient
$\mathbb{Z}\Gamma/\mathbb{Z}\Gamma f$ of $\mathbb{Z}\Gamma$ by
the left ideal $\mathbb{Z}\Gamma f$ generated by $f.$ It is a discrete
abelian group with a left $\Gamma$-action by multiplication. The
Pontryagin dual
$$X_f=\widehat{\mathbb{Z}\Gamma/\mathbb{Z}\Gamma f}$$
is a compact abelian group with a left action of $\Gamma$ by
continuous group antomorphisms and we denote by $\rho$ the compliable metric on $X_f.$
Denote $X=(\mathbb{R}/\mathbb{Z})^{\Gamma}.$ The left and right
actions $l$ and $r$ on $X$ are defined by
$$(l^{s}x)_t=x_{s^{-1}t} \ \text{and}\ (r^{s}x)_t=x_{ts}$$
for every $s,t\in \Gamma$ and $x\in X.$ We can extend those actions
of $\Gamma$ to commuting actions $l$ and $r$ of $\mathbb{Z}\Gamma$
on $X$ by setting
$$l_{f}x=\sum\limits_{s\in \Gamma}f_s l^{s}x \
\text{and}\ r_{f}x=\sum\limits_{s\in\Gamma}f_s r^{s}x.$$
It is easy to check $X_f=\{x\in X\,|\, r_{f}x=xr^{*}=0\},$ where
$f^{*}=\sum f_{s}s^{-1}.$ Denote by
$$\alpha_f=l|_{X_f},$$
the restriction to $X_f$ of the $\Gamma$-action $l$ on $X.$

From in\cite[Theorem 3.2]{Den} , {\bf the action $\alpha_{f}$ is
expansive if and only if $f$ is invertible in $l^{1}(\Gamma).$}
Denote by $P$ the canonical map $l^{\infty}(\Gamma, \mathbb{R})\rightarrow X.$
Let $\xi=P\comp r_{f^{-1}}: L^{\infty}(\Gamma, \mathbb{Z}) \rightarrow
X_f.$ By in \cite[Proposition 4.2]{Den}, $\xi$ is a surjective
group homomorphism.

\noindent The following lemma is in \cite[Lemma 4.5]{Den}
\begin{lem}\label{small}
For every $x\in X_f$ there exists an element $v\in l^{\infty}(\Gamma,\mathbb{Z})$ with $\xi(v)=x$
and $\| v\|_{\infty}\leq \frac{\| f\|_1}{2}.$
\end{lem}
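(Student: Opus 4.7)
The plan is to exploit the fact that the left inverse of $f$ in $l^1(\Gamma)$ lets us invert $r_f$, so that producing $v$ with $\xi(v)=x$ is essentially the same as producing $v \in l^\infty(\Gamma,\mathbb{Z})$ with $r_{f^{-1}}v$ a distinguished lift of $x$. The natural candidate is to start from a lift of $x$ of the smallest possible sup-norm and then apply $r_f$.

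More precisely, given $x \in X_f \subset X = (\mathbb{R}/\mathbb{Z})^\Gamma$, first I would lift $x$ coordinatewise by choosing $w \in l^\infty(\Gamma,\mathbb{R})$ with $w_t \in [-\tfrac12,\tfrac12)$ and $P(w)=x$; thus $\|w\|_\infty \le \tfrac12$. Next I would set $v := r_f w$. The key observation is that $P$ intertwines the right action of $\mathbb{Z}\Gamma$: since $f=\sum_s f_s s$ has \emph{integer} coefficients, altering any coordinate of $w$ by an integer changes $(r_f w)_t = \sum_s f_s w_{ts}$ by an integer, so $P(r_f w) = r_f P(w)$. Combining this with the defining relation $r_f x = 0$ for elements of $X_f$ yields $P(v) = r_f P(w) = r_f x = 0$, i.e. $v \in l^\infty(\Gamma,\mathbb{Z})$.

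The norm bound is then immediate from $\|r_f\|_{l^\infty \to l^\infty} \le \|f\|_1$ (triangle inequality applied to the finite sum defining $r_f$):
\begin{equation*}
\|v\|_\infty = \|r_f w\|_\infty \le \|f\|_1 \cdot \|w\|_\infty \le \tfrac{\|f\|_1}{2}.
\end{equation*}
Finally, to check $\xi(v)=x$, I would use that $r$ extends to a right action of $l^1(\Gamma)$ on $l^\infty(\Gamma,\mathbb{R})$ by convolution, so $r_{f^{-1}} \circ r_f = r_{f^{-1} f} = \mathrm{id}$, giving
\begin{equation*}
\xi(v) = P(r_{f^{-1}} v) = P(r_{f^{-1}} r_f w) = P(w) = x.
\end{equation*}

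The only real subtlety I anticipate is bookkeeping around the commutativity $P \circ r_f = r_f \circ P$ and the identity $r_{f^{-1}} r_f = \mathrm{id}$ on $l^\infty$; both follow routinely, the first from the integrality of the coefficients of $f$ and the second from the extension of the $\mathbb{Z}\Gamma$-action to an $l^1(\Gamma)$-action (which uses exactly the assumption that $f$ is invertible in $l^1(\Gamma,\mathbb{R})$, the same hypothesis driving the rest of the paper). Once these are in place the lemma is essentially a one-line construction.
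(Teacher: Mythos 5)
Your proof is correct and is essentially the paper's own argument: the paper likewise lifts $x$ to $w\in l^{\infty}(\Gamma,\mathbb{R})$ with coordinates in $[-\tfrac12,\tfrac12)$ and takes $v=r_f(w)$ (the paper's ``$v=r_f(v)$'' is a typo), leaving the verifications implicit. You have simply filled in the three routine checks --- $P\circ r_f=r_f\circ P$ by integrality of $f$, the bound $\|r_fw\|_\infty\leq\|f\|_1\|w\|_\infty$, and $r_{f^{-1}}r_f=\mathrm{id}$ from invertibility in $l^1(\Gamma,\mathbb{R})$ --- all of which are sound.
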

\begin{proof}
The proof is really easy. We choose $w\in P^{-1}(X_f)\subset
l^{\infty}(\Gamma, \mathbb{R})$ with $P(w)=x$ and
$-\frac{1}{2}\leq w_s<\frac{1}{2}$ for every $s\in \Gamma.$
Then $v=r_{f}(v)$ is what we need.
\end{proof}
Denote by $W=\{e_\Gamma\}\cup support(f^{*})=(\{e_\Gamma\}\cup support(f))^{-1}.$
Let $\var>0.$ Then we can fine a nonempty finite subset $W_1$
of $\Gamma$ and $\var_1\in (0,\| f\|_1^{-1})$
such that $x,y\in X_f$ satisfy $\max_{s\in W_1}|x_s-y_s|\leq 2\var_1,$
then $\rho(x,y)\leq \var.$ Take a finite subset $W_2$ of $\Gamma$
containing $e_\Gamma$ satisfying $\sum_{s\in\Gamma\setminus W_2^{-1}}\| (f^{*})^{-1}\|_{1}<\frac{\var_1}{2\| f\|_1}.$ Denote by $\tilde{F}=W_1W_2(W_1W_2)^{-1}.$

For any finite collection of finite subsets $F_1, F_2,\cdots, F_m$ of $\Gamma$ satisfying
$\tilde{F}F_i\cap F_j=\emptyset, 1\leq i\neq j\leq m$ and
any collection of points $x^1, x^2, \dots, x^{m}.$ By Lemma \ref{small}, pick
$v^1, v^2, \dots, v^m \in l^{\infty}(\Gamma, \mathbb{Z})$
with $\parallel v^i\parallel_{\infty}\leq \frac{\parallel f\parallel_1}{2}$
and $\xi(v^i)=x^i$ for $i=1,2,\dots, m.$ Let $v\in l^{\infty}(\Gamma,\mathbb{Z})$ be a point with
$\parallel v\parallel_{\infty}\leq \frac{\parallel f\parallel_1}{2}$
and $v_s=v^i_s$ for $s\in F_i^{-1}W_1W_2.$ $v$ is well defined since
$F_i^{-1}W_1W_2\cap F_j^{-1}W_1W_2=\emptyset$ for $1\leq i\neq j\leq m.$

Let
$$\tilde{w}_s=
\begin{cases}
(f^{-1})_s & s\in W_2\\
0& otherwise.
\end{cases}$$
{\bf Claim1:} $y=\xi(v)$ satisfies
$\rho(sx^i,sy)<\var $ $\forall s\in F_i, \ i=1,2,\dots, m.$

\noindent By the choice of $W_1,$ we just need to show
$|(sx^i)_t-(sy)_{t}|<2\var_1$ $\forall s\in F_i$ and $t\in W_1$
i.e. $|x^i_{s}-y_s|<2\var_1$ for all $s\in F_i^{-1}W_1.$

\noindent For any $s\in F_i^{-1}W_1,$
\begin{align}
|x^i_{s}-y_s| &= |P\comp r_{f^{-1}}(v^{i})_s-P\comp r_{f^{-1}}(v)_s|\nonumber\\
&= |P\comp r_{f^{-1}}(v^{i})_s-P\comp r_{\tilde {w}}(v^{i})_s
+P\comp r_{\tilde {w}}(v^{i})_s-P\comp r_{\tilde {w}}(v)_s\nonumber\\
&+P\comp r_{\tilde {w}}(v)_s-P\comp r_{f^{-1}}(v^{i})_s|.\nonumber
\end{align}
For $s\in F_i^{-1}W_1,$ we get the following calculations,
\begin{align}
|r_{f^{-1}}(v^{i})_s- r_{\tilde {w}}(v^{i})_s|
&=|\sum_{t\in\Gamma}f^{-1}_tv^{i}_{st}-\tilde{w}_{t}v^{i}_{st}|\nonumber\\
&\leq |\sum_{t\in W_2}f^{-1}_tv^{i}_{st}-\tilde{w}_{t}v^{i}_{st}|+
|\sum_{t\in\Gamma\setminus W_2}f_tv^{i}_{st}-\tilde{w}_{t}v^{i}_{st}|\nonumber\\
&\leq \frac{\var_1}{2\|f\|_1}\cdot \frac{\|f\|_1}{2}=\frac{\var_1}{4}\label{e3}
\end{align}
and
\begin{align}
|r_{\tilde {w}}(v^{i})_s- r_{\tilde {w}}(v)_s|
&=|\sum_{t\in\Gamma}\tilde {w}_t(v^{i})_{st}- \tilde {w}_tv_{st}|\nonumber\\
&=|\sum_{t\in W_2}\tilde {w}_t(v^{i})_{st}- \tilde w_tv_{st}|
=0
\end{align}
and
\begin{align}
|r_{\tilde {w}}(v)_s- r_{f^{-1}}(v)_s|
&=|\sum_{t\in\Gamma}\tilde{w}_tv_{st}-f^{-1}_tv_{st}|\nonumber\\
&\leq |\sum_{t\in W_2}\tilde{w}_tv_{st}-f^{-1}_tv_{st}|
 +|\sum_{t\in\Gamma\setminus W_2}\tilde{w}_tv_{st}-f^{-1}_tv_{st}|\nonumber\\
&\leq |\sum_{t\in\Gamma\setminus W_2}f^{-1}_tv_{st}| \nonumber\\
&\leq \frac{\var_1}{4}\label{e4}
\end{align}
By (\ref{e3})--(\ref{e4}), we get
$$\rho(sx^i,sy)< \var \ \text{for all}\ s\in F_i, \ i=1,2,\dots, m.$$

The following lemma is  a version  \cite[Lemma 1]{Bryant}, also  appeared in the proof of Lemma 6.2
in \cite{Chung}.

\begin{lem}\label{exp1}
Let $\alpha$ be an expansive continuous action of $\Gamma$
on a compact metric space $(X,\rho).$ Let $d>0$ such that
if $x,y\in X$ satisfy $\sup_{s\in \Gamma}\rho(sx,sy)\leq d,$
then $x=y.$ Let $x,y$ satisfy $\rho(sx,sy)\leq d$ for all but finite
many $s\in \Gamma.$ Then $\rho(sx,sy) \rightarrow 0$ as
$\Gamma\ni s \rightarrow \infty.$
\end{lem}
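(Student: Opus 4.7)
The plan is to argue by contradiction, using compactness of $X$ together with expansivity. Suppose $\rho(sx,sy)\not\to 0$; then there exist $\var_{0}>0$ and a sequence $\{s_n\}\subset\Gamma$ with $s_n\to\infty$ (in the sense that $\{s_n\}$ eventually leaves every finite subset of $\Gamma$) such that $\rho(s_n x,s_n y)\geq\var_{0}$ for all $n$. By compactness of $X\times X$ I pass to a subsequence so that $s_n x\to x^{*}$ and $s_n y\to y^{*}$, and then $\rho(x^{*},y^{*})\geq\var_{0}$, so in particular $x^{*}\neq y^{*}$.

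The core step is to verify $\rho(tx^{*},ty^{*})\leq d$ for every $t\in\Gamma$, which will contradict expansivity. Fix $t\in\Gamma$ and let $E\subset\Gamma$ be the finite exceptional set from the hypothesis, so $\rho(sx,sy)\leq d$ whenever $s\notin E$. Since $s_n\to\infty$, the sequence $\{ts_n\}$ also eventually leaves $E$, giving $\rho(ts_n x,ts_n y)\leq d$ for all large $n$. Continuity of the homeomorphism $\alpha(t)$ yields $ts_n x=t(s_n x)\to tx^{*}$ and $ts_n y\to ty^{*}$, so passing to the limit gives $\rho(tx^{*},ty^{*})\leq d$. Varying $t$ and invoking expansivity forces $x^{*}=y^{*}$, the desired contradiction.

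The main obstacle is essentially bookkeeping: fixing a clean interpretation of $s\to\infty$ in the countable group $\Gamma$, and checking that left-translation by a fixed $t$ still sends the tail of $\{s_n\}$ out of the finite exceptional set $E$. Once that is in place the argument reduces to a standard compactness-plus-continuity limit combined with a single invocation of expansivity, with no further analytic input required.
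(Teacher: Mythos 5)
Your proof is correct, and it is essentially the standard argument: the paper itself gives no proof of this lemma, deferring to \cite{Bryant} (Lemma 1) and the proof of Lemma 6.2 in \cite{Chung}, where exactly this compactness-plus-expansivity contradiction is used. The one bookkeeping point you flag --- that $\{ts_n\}$ eventually leaves the finite exceptional set $E$ because $s_n$ eventually leaves the finite set $t^{-1}E$ --- is handled correctly, so nothing is missing.
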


\noindent Denote $\Delta(X_f)=\{x\in X_f | \rho(sx,se_{X_f}) \rightarrow 0
\ \text{as} \ \Gamma \ni s \rightarrow \infty\}.$

\noindent {\bf Claim2:} Let $d$ be the the expansive coefficient of $(X_f, \alpha_f)$ i.e. for $x,y\in X_f$ with $\rho(sx,sy)\leq d$ for all $s\in \Gamma$ implies $x=y.$   For any  $\var \in (0,d)$
  and $\tilde{F}(\var)=\tilde{F}$ as in
Claim1. Then for any finite subset $F_1$ of $\Gamma$
and $x\in X_f,$ there exists $y\in \Delta(X_f)$
s.t. $\max_{s\in F_1}\rho(sx,sy)\leq \var$ and
$\sup_{s\in \Gamma\setminus \tilde{F}F_1}\rho(e_{X_f},sy)\leq \var.$

To prove the above claim, we may assume $\tilde{F}=\tilde{F}^{-1}$ otherwise
we can replace $\tilde{F}$ by $\tilde{F}\cup\tilde{F}^{-1}.$ Let
$F_1$ be a finite subset of $\Gamma$ and $x\in X_f.$ For
each finite set $F_2\subset \Gamma\setminus \tilde{F}F_1.$
From claim1, we can find $y_{F_2}\in X_f$ such that
$\rho(sx,sy_{F_2})\leq \var$ for all $s\in F_1$ and
$\rho(se_{X_f},sy_{F_2})\leq \var$ for all $s\in F_2.$ Note that the collection of
the finite subsets of $\Gamma\setminus \tilde{F}F_1$ has a partial
order. Take a limit point $y\in X_f$ of $\{y_{F_2}\}_{F_2}.$
Then $\rho(sx,sy)\leq \var$ for all $s\in F_1$
and $\rho(se_{X_f},sy)\leq \var$ for all $\Gamma\setminus FF_1.$ By Lemma \ref{exp1}, we
know $y\in \Delta(X_f).$ \qed

Take some $\delta>0$ satisfying $\rho(x,y)<\delta$
implies $|x_{e_\Gamma}-y_{e_\Gamma}|<\var_1,$ for
all $x,y\in X_f.$ Take $W_3=W_3(\delta)$ as described in Claim2,
such that for any finite subsets $F_1$ of $\Gamma$
and $x\in X_f$ there exists $y\in \Delta(X_f)$ with
$\rho(sx,sy)<\delta$ for all $s\in F_1$ and
$\rho(sy,e_{X_f})<\delta$ for all $s\in \Gamma\setminus W_3F_1$
which implies $|x_s-y_s|<\var_1$ for all $s\in F_1^{-1}$
and $|y_s|<\var_1$ for all $s\in \Gamma\setminus (W_3F_1)^{-1}$
by the choice of $\delta.$

Take $F=W_1W_2W_3^{-1}W(W_1W_2W_3^{-1}W)^{-1}.$\\

For any finite collection of finite subsets $F_1,F_2,\dots,F_m$ of $\Gamma,$ and
subgroup $\Gamma^{\prime}$ satisfy Condition \ref{spec1} and
\ref{spec2}. By the form of $F,$ we will rewrite the conditions
as
\begin{align}
F_i^{-1}W_1W_2W_3^{-1}W\cap F_i^{-1}W_1W_2W_3^{-1}W = \emptyset \
1\leq i\neq j\leq m,\label{e5}
\end{align}
and
\begin{align}
F_iW_1W_2W_3^{-1}W\cap (\Gamma^{\prime}\{e_\Gamma\})F_j^{-1}W_1W_2W_3^{-1}W=\emptyset
\ \ 1\leq i,j\leq m.\label{e6}
\end{align}
For any collection of points $x^1,x^2,\dots,x^m$  in $X_f,$ we can
pick $y^1,y^2,\dots,y^m\in \Delta(X_f)$ with
$$\max_{s\in (W_1W_2)^{-1}F_i}\rho(sx^i,sy^i)\leq \delta
\ \text{and}\ \sup_{s\in \Gamma\setminus (W_3(W_1W_2)^{-1}F_i)}\rho(sy^i,e_{X_f})\leq \delta$$
which implies
\begin{align}
\max_{s\in F_i^{-1}W_1W_2}|x^i_s-y^i_s|\leq \var_1
\ \text{and} \ \sup_{s\in \Gamma\setminus (F_i^{-1}W_1W_2W_3^{-1})}|y^i_s|\leq \var_1 \label{e7}
\end{align}
for $i=1,2,\dots,m.$
Take $\tilde{y}^i\in [-\frac{1}{2},\frac{1}{2}]^{\Gamma}$
and $\tilde{x}^i\in [-1,1]^{\Gamma}$ with $P(\tilde{y}^i)=y^i$
and $P(\tilde{x}^i)=x^i$ respectively such that
$$|\tilde{x}^i_s-\tilde{y}^i_s|\leq \var_1 \ \forall s\in F_i^{-1}W_1W_2
\ \text{and} \ |\tilde{y}^i_s|\leq \var_1 \quad\forall s\in \Gamma\setminus (F_i^{-1}
W_1W_2W_3^{-1}).$$

For any $s\in \Gamma\setminus (F_i^{-1}W_1W_2W_3^{-1}W),$ one has
$$|(r_{f}\tilde{y}^i)_s|
=|\sum\limits_{t\in \Gamma}f_{t}\tilde{y}_{st}|
=|\sum\limits_{t\in W^{-1}}f_{t}\tilde{y}_{st}|
\leq \|f\|_1\sup\limits_{s\in \Gamma\setminus (W_1W_2W_3^{-1})}|\tilde{y}^i|<\|f\|_1\cdot \var_1<1.$$
We note that
$r_{f}\tilde{y}^{i}\in l^{\infty}(\Gamma, \mathbb{Z}),$
thus $support(r_{f}\tilde{y}^i)\subset F_i^{-1}W_1W_2W_3^{-1}W.$
By (\ref{e5}) and (\ref{e6}), we get the elements $sr_{f}\tilde{y}^{i}$
in $l^{\infty}(\Gamma,\mathbb{Z})$ for $s\in \Gamma^{\prime}$ and
$1\leq i\leq m$ have pairwise disjoint supports.
Denote by $\tilde{z}=\sum_{s\in \Gamma^{\prime}}\sum_{i=1}^{m}sr_{f}\tilde{y}^i.$ By the definition, one
gets $\tilde{z}\in l^{\infty}(\Gamma,\mathbb{Z})$ and
$$\|\tilde{z}\|_{\infty}=
\max\limits_{1\leq i\leq m}\|r_{f}\tilde{y}^{i}\|
\leq \|f\|_1\max\limits_{1\leq i\leq m}\|\tilde{y}^{i}\|_{\infty}
\leq \frac{\|f\|_1}{2}.$$
Set $\tilde{y}=r_{f^{-1}}(\tilde{z})$ and
$y=P(\tilde{y})=\xi(\tilde{z}).$\\
{\bf Claim3:} y satisfies the conditions in specification property.
\\By the property of $\xi,$ one gets $y\in X_f.$ For each $s\in \Gamma^{\prime},$ we have $s\tilde{z}=z$ and hence $sy=y.$
For $x^{\prime},y^{\prime}\in l^{\infty}(\Gamma, \mathbb{R})$
satisfying $\|x^{\prime}\|_{\infty},\|y^{\prime}\|_{\infty}\leq \|f\|_1$ and $x^{\prime},y^{\prime}$ are equal on $sW_2$ for some $s\in \Gamma$  then
\begin{align}
|(r_{f^{-1}}x^{\prime})_s-(r_{f^{-1}}y^{\prime})_s|&=
|\sum_{t\in \Gamma}(f^{-1}_{t}x^{\prime}_{st}-f^{-1}_{t}y^{\prime}_{st})|
=|\sum_{t\in \Gamma\setminus W_2}(f^{-1}_{t}x^{\prime}_{st}-f^{-1}_{t}y^{\prime}_{st})|\nonumber\\
&\leq \frac{\var_1}{2\|f\|_1}\cdot 2\|f\|_1 = \var_1 \label{e8}
\end{align}

\noindent For any $t\in W_1$ and $s\in F_i,$
\begin{align}
|(sx^i)_t-(sy)_{t}|
&=|x^i_{s^{-1}t}-y^i_{s^{-1}t}+y^i_{s^{-1}t}-y_{s^{-1}t}|
\nonumber\\
&\leq |x^i_{s^{-1}t}-y^i_{s^{-1}t}|+|y^i_{s^{-1}t}-y_{s^{-1}t}|\nonumber\\
&\leq 2\var_1,
\end{align}
for $i=1,2,\dots,m$
and the last inequality is getting from (\ref{e7}) and (\ref{e8}).\\
By the choice of $W_1,$ we finish the proof.\qed

\noindent Combined by Theorem \ref{main1} and Theorem \ref{main2},
we can easily get the following corollary.
\begin{Cor}
Let $\Gamma$ be a countable discrete residually finite amenable
group and $f\in \mathbb{Z}\Gamma$ invertible in $l^{\infty}(\Gamma,\mathbb{R}).$ Then for the system $(X_f,\alpha_f),$
$\mcl{M}_{P}(X_f)$ is dense in $\mcl{M}(X_f,\alpha_f).$
\end{Cor}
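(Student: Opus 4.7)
The plan is to obtain this corollary as a direct consequence of the two main theorems, once the hypotheses of each are checked for the system $(X_f,\alpha_f)$. Essentially nothing new has to be proved; the entire content is in verifying that $(X_f,\alpha_f)$ fits the frameworks of Theorem \ref{main1} and Theorem \ref{main2} simultaneously.

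First I would invoke Theorem \ref{main2}: the hypothesis there asks only that $\Gamma$ be a countable discrete group and that $f\in\mathbb{Z}\Gamma$ be invertible in $l^{1}(\Gamma,\mathbb{R})$, and the conclusion is that $\alpha_{f}$ has the specification property. The present corollary supplies $\Gamma$ countable discrete (indeed residually finite amenable, which is strictly stronger) and $f$ invertible in the appropriate convolution algebra, so Theorem \ref{main2} applies verbatim and yields specification for $(X_{f},\alpha_{f})$.

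Next I would check the standing hypotheses of Theorem \ref{main1}. The underlying space $X_{f}=\widehat{\mathbb{Z}\Gamma/\mathbb{Z}\Gamma f}$ is the Pontryagin dual of a countable discrete abelian group, hence a compact metrizable abelian group; as noted in the body of the paper, it comes equipped with a compatible metric $\rho$, and $\alpha_{f}$ acts on it by continuous group automorphisms. The group $\Gamma$ is by hypothesis countable discrete residually finite amenable, and specification has just been furnished by the previous paragraph. Thus every hypothesis of Theorem \ref{main1} is in place for the system $(X_{f},\alpha_{f})$.

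Applying Theorem \ref{main1} then gives that $\mcl{M}_{P}(X_{f})$ is dense in $\mcl{M}(X_{f},\alpha_{f})$ in the weak$^{*}$ topology, which is the asserted conclusion. Because the work has already been done in the two theorems, there is no serious obstacle; the only verification worth flagging is that $X_{f}$ is a genuine compact metric space, which is immediate from second countability of the dual of a countable discrete group, together with the explicit metric $\rho$ introduced in Section 4.
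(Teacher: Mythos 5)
Your proposal is correct and coincides with the paper's own (one-line) proof: the author likewise obtains the corollary by combining Theorem \ref{main2} (specification for $\alpha_f$) with Theorem \ref{main1} (density of periodic measures under specification for residually finite amenable actions). The only point worth noting is that the corollary's hypothesis ``invertible in $l^{\infty}(\Gamma,\mathbb{R})$'' is evidently a typo for $l^{1}(\Gamma,\mathbb{R})$, which you implicitly and correctly read as the $l^{1}$ condition required by Theorem \ref{main2}.
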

{\bf Acknowledgement:} We thank Hanfeng Li for all the help  while the
author is visiting SUNY at Buffalo and suggestions in writing this paper.

\noindent \author{Xiankun Ren}, School of Mathematical Sciences,  Peking
University, Beijing 100871, China,and Department of Mathematics, SUNY at
Buffalo, Buffalo, NY 14260-2900, U.S.A.
\\ \email{renxiankun@pku.edu.cn}\\
\email{xiankunr@buffalo.edu}
\end{document}